\begin{document}
\title{\bf \Large On $r$-Equitable Coloring \\ of Complete Multipartite Graphs}

\author{Chih-Hung Yen\thanks{The corresponding author.
Supported in part by the National Science Council under grant NSC101-2115-M-415-004.}\\
\normalsize Department of Applied Mathematics\\
\normalsize National Chiayi University\\
\normalsize Chiayi 60004, Taiwan\\
\normalsize E-mail address: chyen@mail.ncyu.edu.tw}

\date{}
\maketitle

\newtheorem{define}{Definition}
\newtheorem{proposition}[define]{Proposition}
\newtheorem{theorem}[define]{Theorem}
\newtheorem{lemma}[define]{Lemma}
\newtheorem{corollary}[define]{Corollary}
\newtheorem{problem}[define]{Problem}
\newtheorem{conjecture}[define]{Conjecture}
%
%
\newenvironment{proof}{
\par
\noindent {\bf Proof.}\rm}%
{\mbox{}\hfill\rule{0.5em}{0.809em}\par}

\newenvironment{remark}{\par
\noindent {\bf Remark.}\rm}

\baselineskip=16pt
\parindent=0.8cm

\begin{abstract}
\noindent
Let $r \geqslant 0$ and $k \geqslant 1$ be integers.
We say that a graph $G$ has an $r$-equitable $k$-coloring
if there exists a proper $k$-coloring of $G$ such that the sizes of any two color classes differ by at most $r$.
The least $k$ such that a graph $G$ has an $r$-equitable $k$-coloring is denoted by $\chi_{r=} (G)$, and the least $n$ such that a graph $G$ has an $r$-equitable $k$-coloring for all $k \geqslant n$ is denoted by $\chi^*_{r=} (G)$. In this paper, we propose a necessary and sufficient condition for a complete multipartite graph $G$ to have an $r$-equitable $k$-coloring, and also give exact values of $\chi_{r=} (G)$ and $\chi^*_{r=} (G)$.

\bigskip
\noindent
{\bf Keywords}: Equitable coloring; $r$-Equitable coloring; Complete multipartite graph; $r$-Equitable chromatic number; $r$-Equitable chromatic threshold.
\end{abstract}

%
\section{Introduction}
%

A graph $G=(V,E)$ is composed of a nonempty vertex set $V$ and an edge set $E$.
All graphs we consider in this paper are presumed to be undirected, finite, loopless, and without
multiple edges. For a positive integer $k$, a (proper) {\em $k$-coloring} of a graph $G$ is a mapping $f: V
\rightarrow \{1,2,\ldots ,k\}$ such that adjacent vertices have different
images. The images $1,2,\ldots,k$ are called {\em colors} and the corresponding sets $\{u\in V \colon f(u)=1\},\{u\in V \colon f(u)=2\},\ldots, \{u\in V \colon f(u)=k\}$ are called {\em color classes}. Obviously, a color class is an independent set
whose size may be equal to zero in $G$.
And one color in a $k$-coloring of a graph $G$ is said to be {\em missing} if its corresponding color class is an empty set of size zero.
Moreover, a graph is {\em $k$-colorable} if it has a $k$-coloring.
The {\em chromatic number} of a graph $G$, written $\chi(G)$, is the least $k$ such that $G$ is $k$-colorable.

A $k$-coloring of a graph $G$ is said to be {\em equitable} provided that the sizes of any two color classes differ by at most one.
A graph $G$ is {\em equitably $k$-colorable} if $G$ has an equitable $k$-coloring.
The least $k$ such that a graph $G$ is equitably $k$-colorable is called the {\em equitable chromatic number} of $G$ and denoted by $\chi_{=}(G)$.
The notion of equitable colorability was first introduced by Meyer \cite{Meyer1973} in 1973.
His motivation came from the problem of assigning one of the six days of the work week to each garbage collection route. And so far,
quite a few results on equitable coloring of graphs have been obtained in the literature, see $[1,2,4$-$10]$.

Recently, Hertz and Ries \cite{Hertz2011} generalized the notion of equitable colorability. They said that a $k$-coloring of a graph $G$ is {\em $r$-equitable} for an integer $r \geqslant 0$ if the sizes of any two color classes differ by at most $r$. And a graph $G$ is {\em $r$-equitably $k$-colorable} if there exists an $r$-equitable $k$-coloring of $G$. The least $k$ such that a graph $G$ is $r$-equitably $k$-colorable is called the {\em $r$-equitable chromatic number} of $G$ and denoted by $\chi_{r=}(G)$. It is clear that an $r$-equitably $k$-colorable graph is certainly $(r+1)$-equitably $k$-colorable. Moreover, an equitably $k$-colorable graph is also $1$-equitably $k$-colorable, and vice versa. In fact, such a generalization is quite natural since many $k$-colorable graphs do not have equitable $k$-colorings.

Unlike proper colorings of graphs, an equitably (or $r$-equitably) $k$-colorable graph may not be equitably (or $r$-equitably) $(k+1)$-colorable. For example, the graph in Figure \ref{fig1}, denoted by $K_{3,3}$, is equitably $2$-colorable, yet it is not equitably $3$-colorable. Hence, we also have an interest in finding the least $n$ such that a graph $G$ is equitably (or $r$-equitably) $k$-colorable for all $k \geqslant n$,
called the {\em equitable} (or {\em $r$-equitable}) {\em chromatic threshold} of $G$ and denoted by $\chi^*_{=} (G)$ (or $\chi^*_{r=} (G)$). Note that $\chi^*_{0=} (G)$ does not exist for any graph $G$. Because a graph $G$ is not $0$-equitably $k$-colorable for any $k \geqslant |V(G)|+1$.

\bigskip

\begin{figure}[htb]
\begin{center}
\includegraphics[scale=0.75]{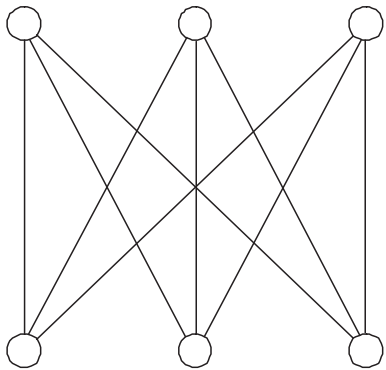}
\caption{The graph $K_{3,3}$.}\label{fig1}
\end{center}
\end{figure}

In this paper, we pay attention to $r$-equitable coloring of a particular class of graphs, called {\em complete multipartite graphs}. We first give a brief review for equitable coloring on complete multipartite graphs related to our results in this paper. Then, for any $r \geqslant 0$, we propose a necessary and sufficient condition for a complete multipartite graph $G$ to have an $r$-equitable $k$-coloring, and also give exact values of $\chi_{r=} (G)$ and $\chi^*_{r=} (G)$.

%
\section{Known results}
%

Recall that a graph $G$ is {\em $t$-partite} if its vertex set can be partitioned into $t$ independent sets $V_{1},V_{2},\ldots ,V_{t}$, and {\em complete $t$-partite}, denoted by $K_{n_{1},n_{2},\ldots,n_{t}}$, if every vertex in $V_{i}$ is adjacent to every vertex in $V_{j}$ whenever
$i \neq j$ and $|V_{i}|=n_{i} \geqslant 1$ for all $1 \leqslant i \leqslant t$. $V_{1},V_{2},\ldots ,V_{t}$ are called {\em partite sets} of $G$. By convention it is always assumed that $t \geqslant 2$ and $1 \leqslant n_1 \leqslant n_2 \leqslant \cdots \leqslant n_t$. And a graph is said to be {\em complete multipartite} if it is complete $t$-partite for some $t$. Furthermore, a complete $t$-partite graph $K_{n_{1},n_{2},\ldots,n_{t}}$ satisfies $n_{1}=n_{2}=\cdots =n_{t}=n$ is also denoted by $K_{t(n)}$.

Let $\lceil x \rceil$ and $\lfloor x \rfloor$ denote, respectively, the
smallest integer not less than $x$ and the largest integer not greater than $x$.
Also, let $\mathbb{N}$ denote the set of all positive integers. In 1994, Wu \cite{Wu1994} proved the followings.

\begin{theorem}\label{1} For any $K_{n_{1},n_{2},\ldots,n_{t}}$, let $p=n_1+n_2+\cdots+n_t$. Then
$K_{n_{1},n_{2},\ldots,n_{t}}$ is equitably $k$-colorable if and only if either $k > p$ or $n_i \geqslant
\lceil{n_i/\lceil p/k \rceil}\rceil \lfloor p/k \rfloor$ for all $1 \leqslant i \leqslant t$ and
$\sum^t_{i=1} \lfloor{n_i/\lfloor p/k \rfloor}\rfloor \geqslant k \geqslant \sum^t_{i=1}
\lceil{n_i/\lceil p/k \rceil}\rceil$ when $k \leqslant p$.
\end{theorem}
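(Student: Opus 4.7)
The plan is to exploit the structural fact that every independent set of $K_{n_1,n_2,\ldots,n_t}$ lies inside a single partite set $V_i$; hence any proper $k$-coloring of $G$ is described by a family of partitions of the partite sets, where $V_i$ is split into some number $c_i\geqslant 0$ of (possibly empty) independent sets and $c_1+c_2+\cdots+c_t=k$.

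If $k>p$, an equitable $k$-coloring is immediate: give each vertex its own private color and pad with $k-p$ empty classes, so that all class sizes lie in $\{0,1\}$. Assume from now on that $k\leqslant p$. First, an equitable $k$-coloring cannot contain an empty class, for otherwise the remaining $k-1$ classes would have total size $p\geqslant k$, and hence at least one of them would have size at least $2$, violating equitability. Setting $q=\lfloor p/k\rfloor$ and $Q=\lceil p/k\rceil$, every color class then has size $q$ or $Q$; let $c_i$ denote the number of classes contained in $V_i$.

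For necessity, the inequalities $c_iq\leqslant n_i\leqslant c_iQ$ translate to $\lceil n_i/Q\rceil\leqslant c_i\leqslant\lfloor n_i/q\rfloor$. Non-emptiness of this integer interval is precisely the condition $n_i\geqslant\lceil n_i/Q\rceil\lfloor p/k\rfloor$, while summing over $i$ yields $\sum_{i=1}^{t}\lceil n_i/Q\rceil\leqslant k\leqslant\sum_{i=1}^{t}\lfloor n_i/q\rfloor$.

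For sufficiency, I build the coloring in two steps. A discrete intermediate-value argument---starting with $c_i=\lceil n_i/Q\rceil$ for every $i$ and then increasing the $c_i$'s one unit at a time, each capped at $\lfloor n_i/q\rfloor$, until the sum reaches $k$---produces $c_i$'s in the stated intervals with $\sum c_i=k$, thanks to the two hypotheses. I then split each $V_i$ into $b_i:=n_i-c_iq$ classes of size $Q$ together with $c_i-b_i$ classes of size $q$; both counts are non-negative from the interval containment, and their contributions total $n_i$. Aggregating these partitions yields an equitable $k$-coloring of $G$. I do not anticipate any genuine obstacle; the only care needed is the discrete-walk step and the non-negativity checks, both of which are routine.
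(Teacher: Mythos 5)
Your proof is correct. Note, however, that the paper never proves Theorem \ref{1}: it is quoted from Wu's 1994 thesis as a known result, so there is no in-paper proof to compare against line by line. The closest analogue is the author's own machinery for the $r$-equitable case (Lemmas \ref{7}--\ref{9} and Theorem \ref{10}), and your argument is essentially its $r=1$ specialization: every nonempty colour class of $K_{n_1,\ldots,n_t}$ lies in a single partite set, so a colouring amounts to choosing $c_i$ classes inside $V_i$ with sizes confined to $\{m,\ldots,m+r\}$, giving $\lceil n_i/(m+r)\rceil \leqslant c_i \leqslant \lfloor n_i/m\rfloor$, followed by a discrete intermediate-value argument to realize any admissible $k$. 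The genuinely useful extra step in your write-up is the observation that for the equitable case with $k\leqslant p$ no class can be empty and the size parameter is forced to be $m=\lfloor p/k\rfloor$; this is what collapses the existential quantifier over $m$ in condition 2 of Theorem \ref{10} into the closed-form test of Theorem \ref{1}. The small checks all hold: the integer interval $[\lceil n_i/Q\rceil,\lfloor n_i/q\rfloor]$ is nonempty iff $n_i\geqslant \lceil n_i/Q\rceil\lfloor p/k\rfloor$ (since $\lceil n_i/Q\rceil\leqslant\lfloor n_i/q\rfloor$ iff $q\lceil n_i/Q\rceil\leqslant n_i$), and your splitting of $V_i$ into $b_i=n_i-c_iq$ classes of size $Q$ and $c_i-b_i$ of size $q$ remains consistent in the degenerate case $k\mid p$, where $q=Q$ and the interval condition forces $b_i=0$.
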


\begin{theorem}\label{2}
$\chi_{=}(K_{n_1,n_2,\ldots,n_t})=
\sum^t_{i=1} \lceil{n_i/h}\rceil$, where
$h= \max \{m \in \mathbb{N} \colon n_i \geqslant \lceil{n_i/m}\rceil (m-1)$ for all $1 \leqslant i \leqslant t\}$.
\end{theorem}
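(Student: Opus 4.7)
The plan is to read off $\chi_{=}(K_{n_{1},n_{2},\ldots,n_{t}})$ directly from Theorem~\ref{1}. Write $k_{0}=\sum_{i=1}^{t}\lceil n_{i}/h\rceil$ and $p=\sum_{i}n_{i}$; the task splits into showing $\chi_{=}\leq k_{0}$ by checking the feasibility conditions of Theorem~\ref{1} at $k=k_{0}$, and $\chi_{=}\geq k_{0}$ by showing that any feasible $k$ must already satisfy $k\geq k_{0}$.

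For the lower bound, note first that $\lceil n_{i}/h\rceil\leq n_{i}$ gives $k_{0}\leq p$, so the case $k>p$ of Theorem~\ref{1} trivially produces $k>k_{0}$. Thus I suppose $K_{n_{1},n_{2},\ldots,n_{t}}$ is equitably $k$-colorable with $k\leq p$, and set $u=\lceil p/k\rceil$ and $\ell=\lfloor p/k\rfloor$, so $\ell\geq u-1$. Theorem~\ref{1} furnishes $n_{i}\geq\lceil n_{i}/u\rceil\,\ell\geq\lceil n_{i}/u\rceil(u-1)$ for every $i$, which places $u$ in the set whose maximum defines $h$; hence $u\leq h$ and $\lceil n_{i}/u\rceil\geq\lceil n_{i}/h\rceil$. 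Combining with the inequality $k\geq\sum_{i}\lceil n_{i}/u\rceil$ from Theorem~\ref{1} yields $k\geq k_{0}$.

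For the upper bound I plug $k=k_{0}$ into Theorem~\ref{1}. The defining inequality for $h$ gives $(h-1)\lceil n_{i}/h\rceil\leq n_{i}\leq h\lceil n_{i}/h\rceil$ for each $i$, so $(h-1)k_{0}\leq p\leq hk_{0}$ and both $\lceil p/k_{0}\rceil,\lfloor p/k_{0}\rfloor\in\{h-1,h\}$. In the generic subcase $\lfloor p/k_{0}\rfloor=h-1$ and $\lceil p/k_{0}\rceil=h$, the first feasibility inequality of Theorem~\ref{1} is precisely the defining condition for $h$, while $\sum_{i}\lfloor n_{i}/(h-1)\rfloor\geq\sum_{i}\lceil n_{i}/h\rceil=k_{0}$ follows from the same condition. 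The expected obstacle is that the condition defining $h$ need not be monotone in $m$, so when an extremal subcase $(h-1,h-1)$ or $(h,h)$ occurs I cannot invoke the condition at a smaller $m$ directly; instead, one of the sum inequalities $(h-1)k_{0}\leq p$ or $p\leq hk_{0}$ is then an equality, forcing $n_{i}=(h-1)\lceil n_{i}/h\rceil$ or $n_{i}=h\lceil n_{i}/h\rceil$ termwise. This gives the termwise divisibility that makes Theorem~\ref{1}'s conditions collapse to tautologies, and handling these extremal cases is the only place where real care is required.
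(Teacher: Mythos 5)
The paper itself offers no proof of this statement: Theorem~\ref{2} is quoted as a known result from Wu's thesis \cite{Wu1994}, so there is no internal argument to compare yours against. Judged on its own, your derivation from Theorem~\ref{1} is correct and complete enough to be checkable. The lower bound is clean: when $k\leqslant p$, putting $u=\lceil p/k\rceil$ and using $\lfloor p/k\rfloor\geqslant u-1$ shows $u$ lies in the set defining $h$, so $u\leqslant h$ and $k\geqslant\sum_i\lceil n_i/u\rceil\geqslant\sum_i\lceil n_i/h\rceil$. For the upper bound, your case split on $(\lfloor p/k_0\rfloor,\lceil p/k_0\rceil)\in\{(h-1,h),(h-1,h-1),(h,h)\}$ is exactly the right bookkeeping, and the extremal cases do resolve as you predict: if $p=hk_0$ then $n_i=h\lceil n_i/h\rceil$ termwise, so $h\mid n_i$ and both feasibility conditions of Theorem~\ref{1} hold with equality; if $p=(h-1)k_0$ then $n_i=(h-1)\lceil n_i/h\rceil$ termwise, whence $\lceil n_i/(h-1)\rceil=\lceil n_i/h\rceil=n_i/(h-1)$ and again everything collapses to equalities. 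One small point worth stating explicitly in a final write-up: in the generic case the left-hand sum condition $\sum_i\lfloor n_i/(h-1)\rfloor\geqslant k_0$ follows because $n_i\geqslant(h-1)\lceil n_i/h\rceil$ gives $n_i/(h-1)\geqslant\lceil n_i/h\rceil$ with an integer right-hand side, so the floor can be taken; and the case $h=1$ cannot occur there since $\lfloor p/k_0\rfloor=h-1\geqslant1$ is forced by $k_0\leqslant p$. With those details filled in, the argument stands.
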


\begin{theorem}\label{3}
$\chi^{*}_{=}(K_{n_1,n_2,\ldots,n_t})=
\sum^t_{i=1} \lceil{n_i/h}\rceil$, where
$h= \min \{m \in \mathbb{N} \colon$there exists some $i$ such that $n_i < \lceil{n_i/(m+1)}\rceil m$ or there exist $n_i$ and $n_j$, $i\neq j$, such that both of $n_i$ and $n_j$ are not divisible by m$\}$.
\end{theorem}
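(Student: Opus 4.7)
The plan is to establish $\chi^*_{=}(G) \leq k^*$ and $\chi^*_{=}(G) \geq k^*$ separately, where $G = K_{n_{1},n_{2},\ldots,n_{t}}$ and $k^* := \sum_{i=1}^{t} \lceil n_i/h \rceil$. The main tool is Theorem~\ref{1}, and the proof reduces to matching its two conditions against the two clauses (call them \emph{(i)} and \emph{(ii)}) in the definition of $h$.

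For the upper bound, I pick any $k \geq k^*$ and derive equitable $k$-colorability from Theorem~\ref{1}. If $k > p$ this is immediate; otherwise set $s = \lceil p/k \rceil$ and $q = \lfloor p/k \rfloor$, and observe that $k \geq k^* \geq p/h$ forces $s \leq h$. Now split on $q = s$ versus $q = s-1$. When $q = s$ (equivalently $k = p/s$), the divisibility-type condition of Theorem~\ref{1} becomes $s \mid n_i$ for all $i$; this follows from the failure of clause \emph{(ii)} at $m = s \leq h$ combined with $s \mid p$. When $q = s - 1$, that same condition reduces to $n_i \geq \lceil n_i/s \rceil(s-1)$, which is precisely the failure of clause \emph{(i)} at $m = s - 1 < h$. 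For the sum inequality of Theorem~\ref{1}, use the failure of clause \emph{(ii)} at $m \in \{s-1, s\}$ to collapse $\sum \lceil n_i/s \rceil$ and $\sum \lfloor n_i/(s-1) \rfloor$ to $\lceil p/s \rceil$ and $\lfloor p/(s-1) \rfloor$ respectively; these bracket $k$ by the definition of $s$.

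For the lower bound, since all $k \geq k^*$ are already handled, it suffices to show that $G$ is not equitably $(k^*-1)$-colorable. Set $k = k^* - 1$ and introduce the discrepancy $\delta := h k^* - p = \sum_{i : h \nmid n_i}(h - (n_i \bmod h))$ together with the count $t' := |\{i : h \nmid n_i\}|$. A short computation pins down $\lceil p/k \rceil$ and $\lfloor p/k \rfloor$: when $\delta < h$ one has $\lceil p/k \rceil = h + 1$ and $\lfloor p/k \rfloor = h$, while when $\delta \geq h$ one has $\lceil p/k \rceil = h$. In the first regime, if clause \emph{(i)} is active at $m = h$ (with offending index $i_0$), then Theorem~\ref{1}'s divisibility condition at $i_0$ fails, being exactly the defining inequality $n_{i_0} < \lceil n_{i_0}/(h+1) \rceil h$; if instead only clause \emph{(ii)} is active (forcing $t' \geq 2$), the identity $\sum \lfloor n_i/h \rfloor = k^* - t' \leq k^* - 2 < k$ makes Theorem~\ref{1}'s upper sum condition fail. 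In the second regime ($\delta \geq h$), the lower sum condition $\sum \lceil n_i/h \rceil \leq k$ fails outright, since $\sum \lceil n_i/h \rceil = k^* > k$.

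The main obstacle is the lower bound. One must establish the precise identities $\lceil p/(k^*-1) \rceil = h + 1$ (when $\delta < h$) and $\lceil p/(k^*-1) \rceil = h$ (when $\delta \geq h$); these in turn require auxiliary inequalities such as $p \geq (h+1)(h - \delta)$ and $\delta < p/(h-1) + h$. Degenerate cases where these auxiliary bounds fail (for instance when $p$ is small relative to $h^2$, or when $t$ and $h$ are both large) must be handled by a further case split, and it is in this bookkeeping---together with the arithmetic identity $\sum \lfloor n_i/h \rfloor = k^* - t'$---that the bulk of the technical work lies.
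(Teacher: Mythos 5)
The paper never proves Theorem~\ref{3} at all---it is quoted from Wu's thesis \cite{Wu1994} as a known result---so your proposal can only be judged on its own terms. Your strategy (reduce both directions to Theorem~\ref{1} and match its conditions against the two clauses defining $h$) is the natural one and the skeleton is sound, but two steps are not actually established. In the upper bound you repeatedly invoke ``the failure of clause \emph{(ii)} at $m = s \leqslant h$''; clause \emph{(ii)} is only guaranteed to fail for $m < h$, and at $m = h$ it may be the active clause. The boundary case $s = h$ therefore needs a separate argument (it does go through: when $q=s=h$ one has $k = p/h \geqslant k^* \geqslant p/h$, forcing $h \mid n_i$ for all $i$ directly; when $q=s-1$ and $s=h$ the inequality $\sum_{i}\lceil n_i/h\rceil = k^* \leqslant k$ is immediate), but as written the step is unjustified.

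The more serious gap is in the lower bound, and you have in effect flagged it yourself: the identities $\lceil p/(k^*-1)\rceil = h+1$ and $\lfloor p/(k^*-1)\rfloor = h$ on which your clause-\emph{(i)} matching rests can fail, and you defer ``the bulk of the technical work'' to an unspecified case split without carrying it out. Writing $p/(k^*-1) = h + (h-\delta)/(k^*-1)$, the identities fail exactly when $h-\delta \geqslant k^*-1$, and this regime is not vacuous: for $K_{4,6}$ one computes $h=4$, $k^*=3$, $\delta=2$, so at $k=2$ one gets $\lceil p/k\rceil = \lfloor p/k\rfloor = 5 = h+1$, and the condition of Theorem~\ref{1} to be violated is $n_{i_0} \geqslant \lceil n_{i_0}/5\rceil\cdot 5$, which is \emph{not} the defining inequality $n_{i_0} < \lceil n_{i_0}/(h+1)\rceil h$ of clause \emph{(i)}; the conclusion still holds there, but via the separate observation that $\lceil n/a\rceil a > n$ unless $a \mid n$, which you never supply. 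Note, on the other hand, that you are working harder than necessary elsewhere: when $\delta \geqslant h$ you only need $\lceil p/k\rceil \leqslant h$, whence $\sum_i\lceil n_i/\lceil p/k\rceil\rceil \geqslant \sum_i\lceil n_i/h\rceil = k^* > k$ with no exact identity; and the clause-\emph{(ii)} subcase is robust since $\lfloor p/k\rfloor \geqslant h$ already gives $\sum_i\lfloor n_i/\lfloor p/k\rfloor\rfloor \leqslant k^* - t' < k$. The unresolved part is thus confined to, but genuinely present in, the subcase $\delta < h$, $t'=1$, $\lfloor p/(k^*-1)\rfloor \geqslant h+1$, and until that case is closed the lower bound is incomplete.
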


Later, in 2001, Lam et al. \cite{Lam2001} also showed the following result which is equivalent to Theorem \ref{2}.

\begin{theorem}\label{4}
$\chi_{=}(K_{n_1,n_2,\ldots,n_t})=
\sum^t_{i=1} \lceil{n_i/(h+1)}\rceil$, where
$h= \max \{m \in \mathbb{N} \colon n_i \ ({\rm mod}
\ m)$ $< \lceil{n_i/m}\rceil$ for all $1 \leqslant i \leqslant t\}$.
\end{theorem}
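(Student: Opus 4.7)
The plan is to derive Theorem~\ref{4} directly from Theorem~\ref{2} by showing that the two closed forms coincide. Let $h_2$ denote the parameter of Theorem~\ref{2} and $h_4$ the parameter of Theorem~\ref{4}. It suffices to prove $h_2 = h_4 + 1$, for then $\lceil n_i/h_2 \rceil = \lceil n_i/(h_4+1) \rceil$ for every $i$ and the two formulas for $\chi_{=}(K_{n_1,n_2,\ldots,n_t})$ agree.

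The heart of the matter is the following pointwise equivalence: for every positive integer $n$ and every integer $m \geqslant 1$,
\[
n \geqslant \bigl\lceil n/(m+1) \bigr\rceil \cdot m \quad \Longleftrightarrow \quad n \bmod m < \lceil n/m \rceil.
\]
Granting this, the condition of Theorem~\ref{2} is satisfied at $m' \geqslant 2$ by every $n_i$ iff the condition of Theorem~\ref{4} is satisfied at $m' - 1$ by every $n_i$. Since both defining sets contain $m = 1$ trivially, and the set defining $h_4$ is in particular nonempty, taking maxima gives $h_2 = h_4 + 1$.

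To establish the numerical equivalence I would write $n = qm + s$ with $0 \leqslant s < m$ and split on $s$. When $s = 0$, the hypothesis $n \geqslant 1$ forces $q \geqslant 1$, so $n \bmod m = 0 < q = \lceil n/m \rceil$; and $qm/(m+1) < q$ gives $\lceil n/(m+1) \rceil \leqslant q$, whence $\lceil n/(m+1) \rceil \cdot m \leqslant qm = n$. When $s > 0$, set $u = \lceil n/m \rceil = q+1$ and $v = \lceil n/(m+1) \rceil$; the right-hand condition reads $s \leqslant q$, and the identity $(u-1)(m+1) = qm + q$ converts this into $v \leqslant q$, which together with $s < m$ is equivalent to $vm \leqslant qm \leqslant n$. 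Conversely, if $s > q$ then $v = u$ (since $v \leqslant u$ always), so $vm = (q+1)m > qm + s = n$ and both conditions fail together.

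The main obstacle is the second case: one needs a clean comparison between $\lceil n/m \rceil$ and $\lceil n/(m+1) \rceil$, which can differ by more than one. The pivotal step is to express both conditions in the common language of the Euclidean division $n = qm + s$ and then use the single identity $(u-1)(m+1) = qm + q$ as the bridge carrying the modular inequality $s \leqslant q$ to the ceiling inequality $v \leqslant u-1$ and thence to the divisibility-style inequality $n \geqslant vm$. Once this bridge is in place, the rest is routine bookkeeping and Theorem~\ref{4} follows by invoking Theorem~\ref{2}.
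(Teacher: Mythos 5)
Your argument is correct. Note that the paper does not actually prove Theorem~\ref{4}: it is quoted from Lam et al.\ as a known result, with only the unproved remark that it ``is equivalent to Theorem~\ref{2}.'' What you have written is precisely a verification of that remark, via the pointwise equivalence $n \geqslant \lceil n/(m+1)\rceil\, m \Leftrightarrow (n \bmod m) < \lceil n/m\rceil$, which shifts the defining set of Theorem~\ref{2} down by one and yields $h_2 = h_4 + 1$. I checked the two cases of your Euclidean-division argument: for $s=0$ both conditions hold (using $qm/(m+1)<q$), and for $s>0$ the chain $s\leqslant q \Rightarrow n\leqslant q(m+1) \Rightarrow v\leqslant q \Rightarrow vm\leqslant n$ together with the contrapositive $s>q \Rightarrow v=q+1 \Rightarrow vm=(q+1)m>qm+s=n$ is sound. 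The only cosmetic gaps are that you should remark that both defining sets are bounded above (e.g.\ each condition fails once $m>n_1+1$), so the maxima exist, and that $m'=2$ always lies in the set of Theorem~\ref{2} (since $n_i\geqslant\lceil n_i/2\rceil$), matching your observation that $m=1$ always lies in the set of Theorem~\ref{4}; neither point causes any difficulty.
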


Recently, in 2010, Lin and Chang \cite{Lin2010} showed the following results for $K_{t(n)}$.

\begin{theorem}\label{5} For any $k \geqslant t$, $K_{t(n)}$ is equitably $k$-colorable if and only if $\lceil{n/\lfloor{k/t}\rfloor}\rceil - \lfloor{n/\lceil{k/t}\rceil}\rfloor \leqslant 1$. \end{theorem}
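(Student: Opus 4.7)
The plan rests on the structural observation that in any proper $k$-coloring of $K_{t(n)}$ each color class must lie entirely in a single partite set, since vertices from different partite sets are mutually adjacent. Let $c_i$ denote the number of colors appearing on $V_i$, so that $V_i$ is split into $c_i$ nonempty classes whose largest part has size at least $\lceil n/c_i\rceil$ and whose smallest part has size at most $\lfloor n/c_i\rfloor$. Throughout, write $\alpha=\lfloor k/t\rfloor$ and $\beta=\lceil k/t\rceil$.

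First I would dispatch the trivial range $k>tn$: assign each vertex its own color and pad with empty classes, which is clearly equitable, and the inequality holds automatically because $\alpha\geqslant n$ forces $\lceil n/\alpha\rceil\leqslant 1$. For $k\leqslant tn$ I would observe that no equitable $k$-coloring can have an empty class, since a single empty class would force every other class to have size at most $1$, giving $tn\leqslant k-1$ and contradicting $k\leqslant tn$; hence $\sum_{i=1}^t c_i=k$.

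For the necessity direction, since the $c_i$ are positive integers summing to $k$, some $c_i\leqslant\alpha$ and some $c_j\geqslant\beta$. The corresponding partite sets produce classes of sizes at least $\lceil n/c_i\rceil\geqslant\lceil n/\alpha\rceil$ and at most $\lfloor n/c_j\rfloor\leqslant\lfloor n/\beta\rfloor$ respectively, so equitability yields $\lceil n/\alpha\rceil-\lfloor n/\beta\rfloor\leqslant 1$. For the sufficiency direction, assuming this inequality, I would set $s=k-t\alpha$, assign $\beta$ colors to $s$ of the partite sets and $\alpha$ colors to the remaining $t-s$, and split each $V_i$ into $c_i$ nearly-equal nonempty classes. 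All resulting class sizes then lie in $\{\lfloor n/\beta\rfloor,\lceil n/\beta\rceil,\lfloor n/\alpha\rfloor,\lceil n/\alpha\rceil\}$, whose minimum is $\lfloor n/\beta\rfloor$ and maximum is $\lceil n/\alpha\rceil$, and these differ by at most $1$ by hypothesis, yielding an equitable $k$-coloring.

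The main obstacle is the bookkeeping between the two natural parameter pairs $(\lfloor tn/k\rfloor,\lceil tn/k\rceil)$, the class-size targets underlying Theorem~\ref{1}, and $(\alpha,\beta)$, the colors-per-part targets appearing in the statement; a direct reduction to Theorem~\ref{1} is possible but forces a tedious case analysis translating between the two pairs, so the construction above is cleaner. The remaining items to check are the boundary case $t\mid k$, where $\alpha=\beta$ and the inequality is trivial, and the realizability of the split of $V_i$ into $c_i\leqslant n$ nonempty parts, which follows from $\beta\leqslant n$ whenever $k\leqslant tn$.
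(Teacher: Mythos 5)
Your argument is correct. The key structural facts you use --- every nonempty color class lies inside one partite set, empty classes are impossible when $k\leqslant tn$, the pigeonhole bounds $c_i\leqslant\lfloor k/t\rfloor$ and $c_j\geqslant\lceil k/t\rceil$ for some $i,j$, and the near-equal split of each $V_i$ for sufficiency --- all check out, including the boundary cases $k>tn$ and $t\mid k$. Note, however, that the paper does not prove Theorem~\ref{5} at all: it is quoted as a known result of Lin and Chang. The closest the paper comes is Corollary~\ref{13}, which generalizes the statement to $r$-equitable colorings and is derived from the general criterion of Theorem~\ref{10}; that criterion is in turn built from Lemmas~\ref{7}--\ref{9}, which decompose a coloring of $K_{n_1,\ldots,n_t}$ into colorings of the independent sets $I_{n_i}$ whose class sizes all lie in a common window $\{m,m+1,\ldots,m+r\}$, and the corollary is then a matter of translating between the window parameter $m$ and the quantities $\lfloor k/t\rfloor$, $\lceil k/t\rceil$. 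Your route is genuinely different in organization: you parametrize by colors-per-part $(c_1,\ldots,c_t)$ rather than by a uniform minimum class size $m$, which gives a short, self-contained, elementary proof of exactly this statement; the paper's machinery is heavier but buys the general case of arbitrary part sizes $n_i$ and arbitrary $r\geqslant 0$ in one stroke. Either way, your proof stands on its own.
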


\begin{theorem}\label{6} $\chi^*_{=}
(K_{t(n)}) = t \lceil{n/h}\rceil$, where $h$ is the least positive integer such that $n$ is not divisible by $h$. \end{theorem}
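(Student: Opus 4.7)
The plan is to invoke Theorem \ref{5}, which reduces the question to verifying the inequality $\lceil n/\lfloor k/t \rfloor \rceil - \lfloor n/\lceil k/t \rceil \rfloor \leq 1$ for each $k \geq t$. Write $n = qh + r$ with $1 \leq r \leq h - 1$, so that $c := \lceil n/h \rceil = q + 1$, and recall that every positive integer strictly less than $h$ divides $n$ by the minimality of $h$. I will show that $K_{t(n)}$ is equitably $k$-colorable for every $k \geq tc$ and that it fails to be equitably $(tc-1)$-colorable; together these give $\chi^*_{=}(K_{t(n)}) = tc$.

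For the upper bound, take $k \geq tc$. If $t \mid k$, then $\lfloor k/t \rfloor = \lceil k/t \rceil$ and the Theorem \ref{5} inequality is automatic. Otherwise let $a = \lfloor k/t \rfloor \geq c$ and $p = \lceil n/a \rceil$. Since $a \geq c \geq n/h$, we have $p \leq h$. The desired inequality is equivalent to $n \geq (p-1)(a+1)$. The case $p = 1$ is trivial, so assume $p \geq 2$; then $p - 1 \in \{1, \ldots, h-1\}$, so $(p-1) \mid n$. Write $n = (p-1)a + r_a$ with $1 \leq r_a \leq a$. If $r_a < a$ (that is, $a \nmid n$), then $(p-1) \mid r_a$ and $r_a \geq 1$ force $r_a \geq p - 1$. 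If $r_a = a$ (that is, $a \mid n$), then $p = n/a$ also divides $n$, and since $\gcd(p-1, p) = 1$ we get $p(p-1) \mid n$, whence $a = n/p \geq p - 1$, i.e., $r_a = a \geq p - 1$. In either case $n = (p-1)a + r_a \geq (p-1)(a+1)$, as required.

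For the lower bound, I set $k = tc - 1$. In the edge case $c = 1$, the condition $\lceil n/h \rceil = 1$ combined with $(h-1) \mid n$ and $h \nmid n$ forces $n = h - 1$; then $k = t - 1 < t = \chi(K_{t(n)})$, so no proper $k$-coloring exists. Otherwise $c \geq 2$ and $k \geq t$, so Theorem \ref{5} applies with $\lfloor k/t \rfloor = q$ and $\lceil k/t \rceil = q + 1$. The inequalities $n = qh + r > qh$ and $n \leq qh + (h-1) < (q+1)h$ give $\lceil n/q \rceil \geq h + 1$ and $\lfloor n/(q+1) \rfloor \leq h - 1$, so $\lceil n/q \rceil - \lfloor n/(q+1) \rfloor \geq 2$, precluding an equitable $(tc-1)$-coloring.

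The main obstacle lies in the upper bound: one must carefully separate the sub-cases $a \mid n$ and $a \nmid n$, and the key unifying observation is that the minimality of $h$ supplies divisibility of $n$ by $p - 1$ and, when $a \mid n$, also by $p$. The coprimality of the consecutive integers $p - 1$ and $p$ is what pushes the troublesome $a \mid n$ sub-case through, while the lower bound is a short direct computation using the decomposition $n = qh + r$.
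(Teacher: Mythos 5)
Your proof is correct. Note first that the paper itself gives no proof of Theorem \ref{6}: it is quoted as a known result of Lin and Chang, and the closest in-paper derivation is Corollary \ref{15} specialized to $r=1$, where the least $\theta$ with $\lfloor n/(\theta+1)\rfloor < \lceil n/(\theta+1)\rceil$ is exactly $h-1$, so that $t\lceil n/(\theta+r)\rceil = t\lceil n/h\rceil$; that route rests on the general machinery of Lemmas \ref{7}--\ref{9} and Theorems \ref{10} and \ref{12}. You instead derive the statement directly from Theorem \ref{5} by an elementary number-theoretic verification, which is a genuinely different and self-contained path: your upper bound exploits the minimality of $h$ (every positive integer below $h$ divides $n$) to force $\lfloor n/(a+1)\rfloor \geq \lceil n/a\rceil - 1$ for all $a \geq \lceil n/h\rceil$, and your lower bound at $k = t\lceil n/h\rceil - 1$ is a two-line computation from $n = qh+r$. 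Both halves check out, including the edge case $c=1$ handled via $\chi(K_{t(n)})=t$. One small simplification: your sub-case split on whether $a \mid n$ is unnecessary, since in either case $(p-1)\mid n$ and $n=(p-1)a+r_a$ give $(p-1)\mid r_a$, and $r_a\geq 1$ already forces $r_a\geq p-1$; the coprimality argument for the $a\mid n$ sub-case, while valid, can be deleted. The trade-off between the two approaches is the usual one: the paper's route yields the far more general $r$-equitable threshold with $\theta$ defined implicitly, while yours gives a shorter, more transparent proof of this particular special case.
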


%
\section{Our results}
%

In what follows, let $I_n$ denote the graph consisting of $n$ isolated vertices, where $n \geqslant 1$.

\begin{lemma}\label{7} For any $r \geqslant 0$, $I_n$ has an $r$-equitable $k$-coloring if and only if there exists an integer $m \geqslant 0$ such that $(m+r)k \geqslant n \geqslant mk$. \end{lemma}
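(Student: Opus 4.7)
The statement is essentially a pigeonhole-style characterization: an $r$-equitable $k$-coloring of $I_n$ is nothing more than a way to write $n$ as a sum of $k$ nonnegative integers (the sizes of the color classes) whose maximum and minimum differ by at most $r$. Since $I_n$ has no edges, every partition of the vertex set into $k$ blocks is automatically a proper $k$-coloring, so the entire content of the claim is about the existence of such an integer partition.

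For the necessity direction, my plan is to take any $r$-equitable $k$-coloring of $I_n$ and let $a_1 \leq a_2 \leq \cdots \leq a_k$ be the sizes of its color classes. Setting $m = a_1 \geq 0$, the $r$-equitability hypothesis forces $a_i \leq m + r$ for every $i$, while $a_i \geq m$ trivially. Summing these $k$ inequalities yields $mk \leq n = \sum_{i=1}^k a_i \leq (m+r)k$, which is exactly the desired condition.

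For the sufficiency direction, I am given $m \geq 0$ with $mk \leq n \leq (m+r)k$ and need to exhibit an explicit $r$-equitable $k$-coloring of $I_n$. I will set $s = n - mk$, so $0 \leq s \leq rk$, and write $s = qk + t$ with $0 \leq t \leq k-1$ and $0 \leq q \leq r$ (noting that $q = r$ forces $t = 0$). I then assign $t$ color classes of size $m+q+1$ and $k-t$ color classes of size $m+q$; the sizes clearly sum to $mk + qk + t = n$, and they differ by at most $1$, and in particular by at most $r$ (the only edge case being $r = 0$, which forces $s = 0$ and hence $q = t = 0$, yielding $k$ classes of equal size $m$).

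There is no real obstacle: the argument is a direct translation between "color class sizes lie in $\{m, m+1, \ldots, m+r\}$" and "$n - mk \in [0, rk]$." The only item worth being careful about is allowing empty color classes (which the paper explicitly permits) and handling $m = 0$ and $r = 0$ as genuine cases, both of which are covered by the construction above.
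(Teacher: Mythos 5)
Your proposal is correct and follows essentially the same route as the paper: both directions hinge on the same observations, with necessity obtained by summing the bounds $m \leqslant a_i \leqslant m+r$ over the $k$ classes, and sufficiency by exhibiting the balanced partition into classes of sizes $\lfloor n/k \rfloor$ and $\lceil n/k \rceil$ (the paper writes this via the identity $n=\sum_{j=0}^{k-1}\lceil (n-j)/k\rceil$, you via the division $n-mk=qk+t$, but it is the same partition). The only cosmetic difference is that the paper checks the class sizes all lie in $[m,m+r]$, whereas you check they differ by at most one and treat $r=0$ separately; both verifications are valid.
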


\begin{proof} $(\Rightarrow)$ Suppose that $I_n$ has an $r$-equitable $k$-coloring. Then there exists a $k$-coloring of $I_n$ such that each of the $k$ color classes is of size $m,m+1,\ldots,$ or $m+r$ for some integer $m \geqslant 0$.
Hence, we have $(m+r)k \geqslant n \geqslant mk$.

$(\Leftarrow)$ Firstly, since $n=\lceil{n/k}\rceil+
\lceil{(n-1)/k}\rceil+ \cdots +\lceil{(n-(k-1))/k}\rceil$, we partition the vertex set of
$I_n$ into $k$ independent sets $V_1,V_2,\ldots,V_k$ of sizes $\lceil{n/k}\rceil,
\lceil{(n-1)/k}\rceil,\ldots,$ $\lceil{(n-(k-1))/k}\rceil$,
respectively. Next, since there exists an integer $m \geqslant 0$ such that $(m+r)k \geqslant n \geqslant mk$,
we have $m+r \geqslant n/k \geqslant m$. It implies that $m+r \geqslant \lceil{n/k}\rceil \geqslant
\lfloor{n/k}\rfloor \geqslant m$ because $m+r$ and $m$ are integers. Then $I_n$ has a $k$-coloring such that each of the $k$ color classes is of size $m,m+1,\ldots,$ or $m+r$ by letting each of $V_1,V_2,\ldots,V_k$ be a color class and $m+r \geqslant \lceil{n/k}\rceil \geqslant \lceil{(n-1)/k}\rceil \geqslant \cdots \geqslant \lceil{(n-(k-1))/k}\rceil =\lfloor{n/k}\rfloor \geqslant m$. Hence, $I_n$ has an $r$-equitable $k$-coloring. \end{proof}

\begin{lemma}\label{8} For any $r \geqslant 1$, $K_{n_1,n_2,\ldots,n_t}$ has an $r$-equitable
$k$-coloring such that at least one color is missing if and only if $k \geqslant (\sum^t_{i=1}
\lceil{n_i/r}\rceil) +1$. \end{lemma}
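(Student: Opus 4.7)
The plan is to exploit the fact that a missing color pins the minimum class size to $0$, so the $r$-equitability forces every color class to have size in $\{0,1,\ldots,r\}$. From there the problem essentially decouples across partite sets, because in a complete multipartite graph every independent set lies inside a single partite set.

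For the necessity direction, I would begin by fixing an $r$-equitable $k$-coloring of $K_{n_{1},n_{2},\ldots,n_{t}}$ with a missing color. Since one class has size $0$ and any two classes differ in size by at most $r$, every class has size at most $r$. Each non-empty color class is an independent set, and in a complete multipartite graph every independent set is contained in some $V_{i}$. Hence the non-empty classes that lie in $V_{i}$ form a partition of $V_{i}$ into parts of size at most $r$, so there are at least $\lceil n_{i}/r\rceil$ of them. Summing over $i$ and adding the missing class yields $k \geqslant \bigl(\sum_{i=1}^{t}\lceil n_{i}/r\rceil\bigr)+1$.

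For the sufficiency direction, I would construct a coloring explicitly. Assume $k \geqslant \bigl(\sum_{i=1}^{t}\lceil n_{i}/r\rceil\bigr)+1$. For each partite set $V_{i}$, partition it into $\lceil n_{i}/r\rceil$ non-empty blocks whose sizes are as balanced as possible; each block then has size at most $\lceil n_{i}/\lceil n_{i}/r\rceil\rceil \leqslant r$ (the inequality following from $\lceil n_{i}/r\rceil \geqslant n_{i}/r$). Declare each of these $\sum_{i=1}^{t}\lceil n_{i}/r\rceil$ blocks a color class, and let the remaining $k - \sum_{i=1}^{t}\lceil n_{i}/r\rceil \geqslant 1$ color classes be empty. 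This is a proper $k$-coloring with at least one color missing, and every class size lies in $\{0,1,\ldots,r\}$, so any two sizes differ by at most $r$, verifying $r$-equitability.

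No serious obstacle is expected; the only subtlety is the clean observation that "one color missing'' together with the $r$-equitability condition caps every class size at $r$, which is what makes the counting argument $\lceil n_{i}/r\rceil$-per-partite-set tight in both directions. The construction in the sufficiency half is the standard balanced partition of each $V_{i}$, and the hypothesis $r\geqslant 1$ is used implicitly to guarantee that $\lceil n_{i}/r\rceil$ is a well-defined positive integer and that the blocks can be made non-empty with size at most $r$.
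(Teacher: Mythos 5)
Your proof is correct and follows essentially the same route as the paper's: both directions hinge on the observation that a missing color forces every class size into $\{0,1,\ldots,r\}$, the necessity count of $\lceil n_i/r\rceil$ classes per partite set is the paper's argument with the reduction to $I_{n_i}$ made explicit, and the sufficiency construction is the same balanced partition (the paper merely packages it via Lemma \ref{7}).
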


\begin{proof} $(\Rightarrow)$ Clearly, if $K_{n_1,n_2,\ldots,n_t}$ has an $r$-equitable
$k$-coloring such that at least one color is missing, then there exists an $r$-equitable
$(k-1)$-coloring of $K_{n_1,n_2,\ldots,n_t}$ such that each of the $k-1$ color classes is of size $0,1,\ldots,$ or $r$. Hence, it implies that we can certainly find positive integers $k_1,k_2,\ldots,k_t$ such that $k-1=\sum^t_{i=1}k_i$ and $I_{n_i}$ has a $k_i$-coloring in which each of the $k_i$ color classes is of size $0,1,\ldots,$ or $r$ for all $1 \leqslant i \leqslant t$. Then we have $rk_i \geqslant n_i$ for all $1 \leqslant i \leqslant t$. Since $r \geqslant 1$ and $k_1,k_2,\ldots,k_t$ are positive integers,
$k_i \geqslant \lceil{n_i/r}\rceil$ for all $1 \leqslant i \leqslant t$.
Therefore, $k-1=\sum^t_{i=1}k_i \geqslant \sum^t_{i=1}\lceil{n_i/r}\rceil$ and thereby $k \geqslant (\sum^t_{i=1} \lceil{n_i/r}\rceil) +1$.

$(\Leftarrow)$ If $k \geqslant (\sum^t_{i=1} \lceil{n_i/r}\rceil)+1$, then $k-1 \geqslant \sum^t_{i=1}\lceil{n_i/r}\rceil$. Hence, we can certainly find positive integers $k_1,k_2,\ldots,k_t$ such that $k-1=\sum^t_{i=1}k_i$ and $k_i \geqslant \lceil{n_i/r}\rceil$ for all $1 \leqslant i \leqslant t$. So, $k_i \geqslant n_i/r$ and $rk_i \geqslant n_i \geqslant 1 > 0=0\cdot k_i$ for all $1 \leqslant i \leqslant t$. Then $I_{n_i}$ has a $k_i$-coloring such that each of the $k_i$ color classes is of size $0,1,\ldots,$ or $r$ for all $1 \leqslant i \leqslant t$ by the proof of Lemma \ref{7}. Therefore, there exists an $r$-equitable $(k-1)$-coloring of $K_{n_1,n_2,\ldots,n_t}$ such that each of the $k-1$ color class is of size $0,1,\ldots,$ or $r$ by $k-1=\sum^t_{i=1}k_i$. It implies that $K_{n_1,n_2,\ldots,n_t}$ has an $r$-equitable $k$-coloring such that at least one color is missing. \end{proof}

\bigskip

Note that $K_{n_1,n_2,\ldots,n_t}$ has no $0$-equitable
$k$-coloring such that at least one color is missing; otherwise, the order of $K_{n_1,n_2,\ldots,n_t}$ is equal to zero.

\begin{lemma}\label{9} For any $r \geqslant 0$, $K_{n_1,n_2,\ldots,n_t}$ has an $r$-equitable
$k$-coloring such that no color is missing if and only if there exists a positive integer $m$ such that $\lfloor n_i/m \rfloor \geqslant \lceil{n_i/(m+r)}\rceil$ for all $1 \leqslant i \leqslant t$ and $\sum^t_{i=1} \lfloor{n_i/m}\rfloor \geqslant k \geqslant \sum^t_{i=1}
\lceil{n_i/(m+r)}\rceil$. \end{lemma}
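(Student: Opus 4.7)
The plan is to reduce the two directions to Lemma \ref{7} applied separately to each partite set, using the fact that in a complete multipartite graph every independent set (hence every color class) lies entirely inside one partite set.

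For the forward direction, suppose $K_{n_1,n_2,\ldots,n_t}$ has an $r$-equitable $k$-coloring in which no color is missing. Every class has positive size, and by $r$-equitability all $k$ sizes lie in some integer interval $\{m, m+1, \ldots, m+r\}$, where $m \geqslant 1$ is chosen to be the minimum class size. Let $k_i$ be the number of color classes contained in the partite set $V_i$, so $\sum_{i=1}^t k_i = k$. Counting the vertices in $V_i$ yields $m k_i \leqslant n_i \leqslant (m+r)k_i$, and since $k_i$ is an integer this rearranges to $\lceil n_i/(m+r)\rceil \leqslant k_i \leqslant \lfloor n_i/m\rfloor$. In particular $\lfloor n_i/m\rfloor \geqslant \lceil n_i/(m+r)\rceil$ for every $i$, and summing over $i$ produces the asserted double inequality for $k$.

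For the reverse direction, given the positive integer $m$ from the hypothesis, I would first choose positive integers $k_1,\ldots,k_t$ with $\lceil n_i/(m+r)\rceil \leqslant k_i \leqslant \lfloor n_i/m\rfloor$ and $\sum_{i=1}^t k_i = k$. Such a choice exists because each integer interval is nonempty (by the per-$i$ hypothesis), the total sum $k$ lies between $\sum \lceil n_i/(m+r)\rceil$ and $\sum \lfloor n_i/m\rfloor$, and each lower bound is at least $1$ since $n_i \geqslant 1$. For each $i$, the inequalities $(m+r)k_i \geqslant n_i \geqslant m k_i$ allow Lemma \ref{7} to furnish an $r$-equitable $k_i$-coloring of $I_{n_i}$ whose class sizes all lie in $\{m,m+1,\ldots,m+r\}$ (because the construction in that lemma produces classes of sizes $\lfloor n_i/k_i\rfloor$ and $\lceil n_i/k_i\rceil$, both in that interval). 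Assigning disjoint palettes to the $t$ partite sets and taking the union yields a proper $k$-coloring of $K_{n_1,n_2,\ldots,n_t}$ whose $k$ class sizes all lie in $\{m,\ldots,m+r\}$; this is $r$-equitable, and no color is missing because $m \geqslant 1$.

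The only step that is not essentially bookkeeping is the selection of the $k_i$'s summing to $k$ inside the prescribed intervals; this is the standard fact that if each coordinate has an integer range $[a_i,b_i]$ and $\sum a_i \leqslant k \leqslant \sum b_i$ then some integer vector in the product of the intervals sums to $k$ (obtained by starting at $(a_1,\ldots,a_t)$ and incrementing coordinates one unit at a time while they remain below their upper bounds). I expect this to be the sole mild obstacle; once it is in hand, everything else follows immediately from Lemma \ref{7} and the structure of complete multipartite graphs.
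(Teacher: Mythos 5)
Your proof is correct and follows essentially the same route as the paper's: both directions reduce to Lemma \ref{7} by decomposing the coloring over the partite sets (each color class lies in a single partite set), with $m$ taken as the minimum class size in the forward direction and the integer-interval selection of the $k_i$'s in the reverse direction. The paper simply asserts the existence of suitable $k_1,\ldots,k_t$ where you spell out the standard incrementing argument; otherwise the two proofs coincide.
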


\begin{proof} $(\Rightarrow)$ It is obvious that if $K_{n_1,n_2,\ldots,n_t}$ has an $r$-equitable
$k$-coloring such that no color is missing, then we can certainly find positive integers $k_1,k_2,\ldots,k_t,$ and $m$ such that $k=\sum^t_{i=1}k_i$ and $I_{n_i}$ has a $k_i$-coloring in which each of the $k_i$ color classes is of size $m,m+1,\ldots,$ or $m+r$ for all $1 \leqslant i \leqslant t$. Hence, we have $(m+r)k_i \geqslant n_i \geqslant mk_i$ for all $1 \leqslant i \leqslant t$. Since $k_1,k_2,\ldots,k_t,$ and $m$ are positive integers,
it implies that $n_i/m \geqslant k_i \geqslant n_i/(m+r)$ and thereby $\lfloor{n_i/m}\rfloor \geqslant k_i \geqslant \lceil{n_i/(m+r)}\rceil$ for all $1 \leqslant i \leqslant t$.
Therefore, $\lfloor n_i/m \rfloor \geqslant
\lceil{n_i/(m+r)}\rceil$ for all $1 \leqslant i \leqslant t$ and
$\sum^t_{i=1} \lfloor{n_i/m}\rfloor \geqslant \sum^t_{i=1}k_i=k \geqslant \sum^t_{i=1}
\lceil{n_i/(m+r)}\rceil$.

$(\Leftarrow)$ If there exists a positive integer $m$ such that $\lfloor{n_i/m}\rfloor \geqslant \lceil{n_i/(m+r)}\rceil$ for all $1 \leqslant i \leqslant t$ and $\sum^t_{i=1} \lfloor{n_i/m}\rfloor \geqslant k \geqslant \sum^t_{i=1} \lceil{n_i/(m+r)}\rceil$, then we can certainly find positive integers $k_1,k_2,\ldots,k_t$ such that $k=\sum^t_{i=1}k_i$ and $\lfloor{n_i/m}\rfloor \geqslant k_i \geqslant \lceil{n_i/(m+r)}\rceil$ for
all $1 \leqslant i \leqslant t$. Hence, $n_i/m \geqslant k_i \geqslant n_i/(m+r)$ and thereby $(m+r)k_i \geqslant n_i \geqslant mk_i$ for all $1 \leqslant i \leqslant t$. Then $I_{n_i}$ has a $k_i$-coloring in which each of the $k_i$ color classes is of size $m,m+1,\ldots,$ or $m+r$ for all $1 \leqslant i \leqslant t$ by the proof of Lemma \ref{7}. Therefore, $K_{n_1,n_2,\ldots,n_t}$ has an $r$-equitable $k$-coloring such that no color is missing by $k=\sum^t_{i=1}k_i$ and $m \geqslant 1$. \end{proof}

\bigskip

By the conclusions of Lemmas \ref{8} and \ref{9},
we can conclude the necessary and sufficient condition
for a complete $t$-partite graph $K_{n_1,n_2,\ldots,n_t}$ to have an $r$-equitable $k$-coloring.

\begin{theorem}\label{10} For any $r \geqslant 0$, $K_{n_1,n_2,\ldots,n_t}$ has an $r$-equitable
$k$-coloring if and only if at least one of the following statements holds.
\begin{enumerate}
\item $r \geqslant 1$ and $k \geqslant (\sum^t_{i=1}\lceil{n_i/r}\rceil) +1$.
\item There exists a positive integer $m$ such that $\lfloor n_i/m \rfloor \geqslant \lceil{n_i/(m+r)}\rceil$ for all $1 \leqslant i \leqslant t$ and $\sum^t_{i=1} \lfloor{n_i/m}\rfloor \geqslant k \geqslant \sum^t_{i=1}\lceil{n_i/(m+r)}\rceil$. \end{enumerate} \end{theorem}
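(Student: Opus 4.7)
The plan is to observe that any $r$-equitable $k$-coloring of $K_{n_{1},n_{2},\ldots,n_{t}}$ falls into exactly one of two mutually exclusive types: either at least one color is missing (i.e.\ some color class has size $0$), or every color class is nonempty. Lemmas \ref{8} and \ref{9} characterize precisely when a coloring of each type exists, so the theorem should follow by combining them case by case, with conditions 1 and 2 corresponding to the two types respectively.

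For the forward direction, I would fix an $r$-equitable $k$-coloring of $K_{n_{1},n_{2},\ldots,n_{t}}$ and branch on whether some color is missing. If some color is missing, I invoke Lemma \ref{8} to conclude that $r \geqslant 1$ and $k \geqslant (\sum^{t}_{i=1}\lceil n_{i}/r\rceil) + 1$, which is condition 1. The only subtle point here is ruling out $r = 0$: a $0$-equitable coloring forces every color class to share a common size, so a missing color would force every class to be empty, contradicting $n_{1}+\cdots+n_{t} \geqslant t \geqslant 2$. If no color is missing, I invoke Lemma \ref{9} to obtain condition 2 directly.

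For the backward direction, I would just apply the two lemmas in the other direction. If condition 1 holds, Lemma \ref{8} produces an $r$-equitable $k$-coloring with at least one missing color. If condition 2 holds, Lemma \ref{9} produces an $r$-equitable $k$-coloring in which no color is missing. Either way, an $r$-equitable $k$-coloring exists.

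Since Lemmas \ref{8} and \ref{9} already carry the full combinatorial content, no new estimate or construction is required; the proof is essentially bookkeeping. The only mildly delicate step is the degenerate $r = 0$ observation above, which is why Lemma \ref{8} is stated only for $r \geqslant 1$ and why condition 1 explicitly includes the hypothesis $r \geqslant 1$.
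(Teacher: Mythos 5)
Your proposal is correct and matches the paper's approach exactly: the paper derives Theorem \ref{10} by the same case split on whether a color is missing, combining Lemmas \ref{8} and \ref{9}, and it handles the $r=0$ degeneracy with the same observation (stated as a remark before Lemma \ref{9}) that a $0$-equitable coloring with a missing color would force the graph to have order zero.
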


For example, $K_{3,5,7}$ has a $2$-equitable $k$-coloring such that at least one color is missing
if and only if $k \geqslant 10$. Moreover, if we choose $m=1,2,3$, then we get that $K_{3,5,7}$ has a $2$-equitable $k$-coloring such that no color is missing if and only if $15 \geqslant k \geqslant 4$.
Hence, $K_{3,5,7}$ has a $2$-equitable $k$-coloring if and only if $k \geqslant 4$.

\begin{theorem}\label{11} For any $r \geqslant 0$ and $1 \leqslant n_1 \leqslant n_2 \leqslant \cdots \leqslant n_t$,
let $\theta= \max \{ m \in \mathbb{N} \colon \lfloor n_i/m \rfloor \geqslant
\lceil{n_i/(m+r)}\rceil$ for all $1 \leqslant i \leqslant t\}$.
Then $\chi_{r=}(K_{n_1,n_2,\ldots,n_t})=\sum^t_{i=1} \lceil{n_i/(\theta+r)}\rceil$. \end{theorem}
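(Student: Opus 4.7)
My plan is to derive Theorem \ref{11} directly from the characterization in Theorem \ref{10}, by showing that the proposed value $k^{\ast} := \sum_{i=1}^{t}\lceil n_i/(\theta+r)\rceil$ is both achievable as an $r$-equitable coloring size and a lower bound on any such $k$.

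First I would check that $\theta$ is well defined. Taking $m=1$ gives $\lfloor n_i/1\rfloor = n_i \geqslant \lceil n_i/(1+r)\rceil$ for every $i$, so the set in the definition of $\theta$ is nonempty; it is also finite because for $m>n_t$ we have $\lfloor n_i/m\rfloor=0$ while $\lceil n_i/(m+r)\rceil\geqslant 1$. Hence $\theta\in\mathbb{N}$ and $\theta\geqslant 1$.

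For the upper bound I would apply Theorem \ref{10}(2) with $m=\theta$. By the very definition of $\theta$, the hypothesis $\lfloor n_i/\theta\rfloor\geqslant \lceil n_i/(\theta+r)\rceil$ holds for all $i$. Summing over $i$ gives $\sum_{i=1}^{t}\lfloor n_i/\theta\rfloor \geqslant k^{\ast}$, while trivially $k^{\ast}\geqslant \sum_{i=1}^{t}\lceil n_i/(\theta+r)\rceil$. So the sandwich condition of Theorem \ref{10}(2) is satisfied with $k=k^{\ast}$, and $K_{n_1,\ldots,n_t}$ therefore admits an $r$-equitable $k^{\ast}$-coloring. This yields $\chi_{r=}(K_{n_1,\ldots,n_t})\leqslant k^{\ast}$.

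For the lower bound I would take any $r$-equitable $k$-coloring and show $k\geqslant k^{\ast}$ by splitting along the two alternatives of Theorem \ref{10}. If condition (2) of Theorem \ref{10} holds with some positive integer $m$, then $m$ is eligible in the set defining $\theta$, so $m\leqslant\theta$, whence $m+r\leqslant \theta+r$ and $\lceil n_i/(m+r)\rceil\geqslant \lceil n_i/(\theta+r)\rceil$ for every $i$; summing gives $k\geqslant \sum_{i=1}^{t}\lceil n_i/(m+r)\rceil\geqslant k^{\ast}$. If instead only condition (1) holds, so $r\geqslant 1$ and $k\geqslant (\sum_{i=1}^{t}\lceil n_i/r\rceil)+1$, I use $\theta\geqslant 1$ to get $\theta+r>r$, hence $\lceil n_i/(\theta+r)\rceil\leqslant \lceil n_i/r\rceil$, so $k^{\ast}\leqslant \sum_{i=1}^{t}\lceil n_i/r\rceil<k$.

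The only potentially delicate point is keeping the two alternatives of Theorem \ref{10} properly separated in the lower-bound argument and making sure the case $r=0$ is covered (condition (1) is then vacuous and only case (2) can occur). Once these book-keeping details are handled, the two bounds combine to yield the desired equality.
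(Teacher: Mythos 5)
Your proposal is correct and follows essentially the same route as the paper: well-definedness of $\theta$, the upper bound via Theorem \ref{10}(2) with $m=\theta$, and the lower bound by noting that any eligible $m$ satisfies $m\leqslant\theta$ (the paper phrases this last step as a contradiction from $k<k^{\ast}$, but the content is identical). The only cosmetic difference is that you bound the eligible $m$ by $n_t$ where the paper uses $n_1$; both work.
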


\begin{proof} Firstly, since $\lfloor n_i/1 \rfloor = n_i \geqslant
\lceil{n_i/(1+r)}\rceil$ for all $1 \leqslant i \leqslant t$, we have that $\theta$ exists with $\theta \geqslant 1$. Secondly, if $m \geq n_1+1$, then $\lfloor n_1/m \rfloor =0 < 1 = \lceil{n_1/(m+r)}\rceil$. Hence, $\theta \leqslant n_1$. Finally, if $k = \sum^t_{i=1}
\lceil{n_i/(\theta+r)}\rceil$, then $K_{n_1,n_2,\ldots,n_t}$ has
an $r$-equitable $k$-coloring by the choice of $\theta$ and Theorem \ref{10}. Now, let $k < \sum^t_{i=1} \lceil{n_i/(\theta+r)}\rceil$, and suppose
that $K_{n_1,n_2,\ldots,n_t}$ has an $r$-equitable $k$-coloring. By $k < \sum^t_{i=1} \lceil{n_i/r}\rceil$ if $r \geqslant 1$ and Theorem \ref{10},
we know that there exists a positive integer $m$ such that $\lfloor n_i/m \rfloor \geqslant
\lceil{n_i/(m+r)}\rceil$ for all $1 \leqslant i \leqslant t$ and
$\sum^t_{i=1} \lfloor{n_i/m}\rfloor \geqslant k \geqslant \sum^t_{i=1}
\lceil{n_i/(m+r)}\rceil$. Then $m \leqslant \theta$ by the choice of $\theta$,
and thereby $k \geqslant \sum^t_{i=1} \lceil{n_i/(m+r)}\rceil \geqslant \sum^t_{i=1}
\lceil{n_i/(\theta+r)}\rceil$. It is a contradiction. Thus $K_{n_1,n_2,\ldots,n_t}$ has
no $r$-equitable $k$-coloring when $k < \sum^t_{i=1} \lceil{n_i/(\theta+r)}\rceil$. Therefore, we can conclude that $\chi_{r=}(K_{n_1,n_2,\ldots,n_t})=
\sum^t_{i=1} \lceil{n_i/(\theta+r)}\rceil$. \end{proof}

\begin{theorem}\label{12} For any $r \geqslant 1$ and $1 \leqslant n_1 \leqslant n_2 \leqslant \cdots \leqslant n_t$, let $m_1,m_2,\ldots,m_x$ be all positive integers such that $m_1 < m_2 < \cdots < m_x$ and $\lfloor n_i/m_j \rfloor \geqslant \lceil{n_i/(m_j+r)}\rceil$ for all $1 \leqslant i \leqslant t$ and $1 \leqslant j \leqslant x$. Also, let $M=\{m_1,m_2,\ldots,m_x\}$ and $\theta=\min\{m_j \in M \colon \sum^t_{i=1}\lceil{n_i/(m_j+r)}\rceil > (\sum^t_{i=1}\lfloor{n_i/m_{j+1}}\rfloor) +1$ or $m_j=m_x\}$. Then $\chi^*_{r=}
(K_{n_1,n_2,\ldots,n_t}) = \sum^t_{i=1} \lceil{n_i/(\theta+r)}\rceil$. \end{theorem}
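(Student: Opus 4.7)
The plan is to interpret the set $S=\{k \in \mathbb{N} : K_{n_1,n_2,\ldots,n_t}$ has an $r$-equitable $k$-coloring$\}$ as a union of intervals coming from Theorem \ref{10}, and then read off the least $n$ such that $[n,\infty) \subseteq S$. Concretely, for each $m_j \in M$, condition (2) of Theorem \ref{10} contributes the interval $I_j = [\sum^t_{i=1}\lceil{n_i/(m_j+r)}\rceil,\, \sum^t_{i=1}\lfloor{n_i/m_j}\rfloor]$, and condition (1) contributes the tail $T = [(\sum^t_{i=1}\lceil{n_i/r}\rceil)+1, \infty)$. Since $r \geqslant 1$, one has $\sum^t_{i=1}\lceil{n_i/r}\rceil \leqslant p := \sum^t_{i=1} n_i$, so $T \supseteq [p+1,\infty)$; and since $m_1=1$ always lies in $M$, $I_1=[\sum^t_{i=1}\lceil{n_i/(1+r)}\rceil,\, p]$, so that $I_1 \cup T$ already covers $[\sum^t_{i=1}\lceil{n_i/(1+r)}\rceil,\infty)$.

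For the upper bound $\chi^*_{r=}(K_{n_1,n_2,\ldots,n_t}) \leqslant \sum^t_{i=1}\lceil{n_i/(\theta+r)}\rceil$, I will first note that both endpoints $\sum^t_{i=1}\lceil{n_i/(m_j+r)}\rceil$ and $\sum^t_{i=1}\lfloor{n_i/m_j}\rfloor$ of $I_j$ are nonincreasing in $j$, because $m_j$ is increasing. The key structural observation is that consecutive intervals $I_j$ and $I_{j+1}$ fit together with no integer gap precisely when $\sum^t_{i=1}\lceil{n_i/(m_j+r)}\rceil \leqslant \sum^t_{i=1}\lfloor{n_i/m_{j+1}}\rfloor+1$, which is exactly the negation of the inequality in the definition of $\theta$. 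Since $\theta$ is the smallest $m_j \in M$ at which that defining inequality (or $m_j = m_x$) first holds, every pair $(I_j, I_{j+1})$ with $m_j<\theta$ chains together; hence $\bigcup_{j=1}^{j_0} I_j = [\sum^t_{i=1}\lceil{n_i/(\theta+r)}\rceil,\, p]$, where $m_{j_0}=\theta$. Combined with $T$, this covers $[\sum^t_{i=1}\lceil{n_i/(\theta+r)}\rceil,\infty)$.

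For the matching lower bound, the plan is to show that the witness $k=(\sum^t_{i=1}\lceil{n_i/(\theta+r)}\rceil)-1$ fails both conditions of Theorem \ref{10}. Because $\theta \geqslant 1$, we have $\sum^t_{i=1}\lceil{n_i/(\theta+r)}\rceil \leqslant \sum^t_{i=1}\lceil{n_i/r}\rceil$, so $k < (\sum^t_{i=1}\lceil{n_i/r}\rceil)+1$ and condition (1) fails. For condition (2) I will split on the candidate $m \in M$: if $m \leqslant \theta$, the lower endpoint of $I_m$ is at least $\sum^t_{i=1}\lceil{n_i/(\theta+r)}\rceil > k$; if instead $m > \theta$ (which is possible only when $\theta < m_x$), the defining inequality of $\theta$ gives $\sum^t_{i=1}\lfloor{n_i/m}\rfloor \leqslant \sum^t_{i=1}\lfloor{n_i/m_{j_0+1}}\rfloor < \sum^t_{i=1}\lceil{n_i/(\theta+r)}\rceil - 1 = k$, so the upper endpoint of $I_m$ is below $k$. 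In every case $k \notin I_m$, so condition (2) also fails.

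The main difficulty I anticipate is this lower-bound case split: one must carefully verify that the witness value $k=(\sum^t_{i=1}\lceil{n_i/(\theta+r)}\rceil)-1$ genuinely sits in a gap of $S$, simultaneously avoiding condition (1) and both the $m \leqslant \theta$ and $m > \theta$ branches of condition (2). Once the monotonicity of the endpoints of $I_j$ in $m_j$ and the "no gap $\Longleftrightarrow$ negation of the defining inequality of $\theta$" reformulation are in hand, the upper bound reduces to a routine chaining of intervals, with $T$ mopping up the range $[p+1,\infty)$.
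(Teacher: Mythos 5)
Your proposal is correct and follows essentially the same route as the paper: both reduce to Theorem \ref{10}, show that the intervals $\bigl[\sum_{i}\lceil n_i/(m_j+r)\rceil,\ \sum_{i}\lfloor n_i/m_j\rfloor\bigr]$ for $m_j\leqslant\theta$ chain together without gaps (the paper phrases this as choosing the maximal $m_\ell$ with $\sum_i\lfloor n_i/m_\ell\rfloor\geqslant k$ and deriving a contradiction), and both verify that $k=\bigl(\sum_i\lceil n_i/(\theta+r)\rceil\bigr)-1$ violates condition (1) and both branches of condition (2). The only difference is presentational: you argue globally about a union of intervals, while the paper argues pointwise for each $k$.
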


\begin{proof} Firstly, since $\lfloor n_i/1 \rfloor = n_i \geqslant
\lceil{n_i/(1+r)}\rceil$ for all $1 \leqslant i \leqslant t$, we have $m_1=1$, and thereby $M$ is a nonempty set and $\theta$ exists with $\theta \geqslant 1$. Secondly, if $k \geqslant (\sum^t_{i=1}\lceil{n_i/r}\rceil) +1$, then $K_{n_1,n_2,\ldots,n_t}$ has an $r$-equitable $k$-coloring by Theorem \ref{10}. Finally, let $k$ satisfy $\sum^t_{i=1}\lceil{n_i/r}\rceil \geqslant k \geqslant \sum^t_{i=1}\lceil{n_i/(\theta+r)}\rceil$, and also let $m_{\ell}=\max\{m_j\in M \colon \sum^t_{i=1}\lfloor{n_i/m_j}\rfloor \geqslant k\}$. Since $\sum^t_{i=1}\lfloor{n_i/m_1}\rfloor =\sum^t_{i=1}n_i \geqslant \sum^t_{i=1}\lceil{n_i/r}\rceil \geqslant k$, we know that $m_{\ell}$ exists with $m_{\ell} \geqslant 1$. Also, $m_{\ell} \leqslant \theta$ by the choice of $\theta$. Then we want to show that $\sum^t_{i=1} \lfloor{n_i/m_{\ell}}\rfloor \geqslant k \geqslant \sum^t_{i=1}
\lceil{n_i/(m_{\ell}+r)}\rceil$, and thus $K_{n_1,n_2,\ldots,n_t}$ has an $r$-equitable $k$-coloring by Theorem \ref{10}. Suppose that $k < \sum^t_{i=1}
\lceil{n_i/(m_{\ell}+r)}\rceil$. Then $m_{\ell} < m_{\ell+1} \leqslant \theta$ by $k \geqslant \sum^t_{i=1}\lceil{n_i/(\theta+r)}\rceil$, and $k > \sum^t_{i=1}\lfloor{n_i/m_{\ell+1}}\rfloor$ by the choice of $m_{\ell}$. Hence, we have $\sum^t_{i=1}
\lceil{n_i/(m_{\ell}+r)}\rceil > (\sum^t_{i=1}\lfloor{n_i/m_{\ell+1}}\rfloor)+1$. It is a contradiction by $m_{\ell} < \theta$ and the choice of $\theta$. Now, let $k = (\sum^t_{i=1}\lceil{n_i/(\theta+r)}\rceil)-1$. Then $k < \sum^t_{i=1}\lceil{n_i/r}\rceil$ by $\theta \geqslant 1$. Also, $k < \sum^t_{i=1}\lceil{n_i/(m_j+r)}\rceil$ for each $m_j \leqslant \theta$. Moreover, by the choice of $\theta$, $k > \sum^t_{i=1}\lfloor{n_i/m_j}\rfloor$ for each $m_j > \theta$. So, there exists no $m_j \in M$ such that $\sum^t_{i=1} \lfloor{n_i/m_j}\rfloor \geqslant k \geqslant \sum^t_{i=1}
\lceil{n_i/(m_j+r)}\rceil$. Then $K_{n_1,n_2,\ldots,n_t}$ has no $r$-equitable $k$-coloring by Theorem \ref{10}. Thus we can conclude that $\chi^*_{r=}
(K_{n_1,n_2,\ldots,n_t}) = \sum^t_{i=1} \lceil{n_i/(\theta+r)}\rceil$. \end{proof}

\bigskip

In fact, it is not difficult to observe that if a graph $G$ has an $r$-equitable $k$-coloring such that at least one color is missing, then there must exist a positive integer $k' < k$ such that $G$ has an $r$-equitable $k'$-coloring in which no color is missing. Hence, the $r$-equitable chromatic number $\chi_{r=}(G)$ of a graph $G$ is actually equal to the least $k$ such that $G$ has an $r$-equitable $k$-coloring in which no color is missing. Similarly, the $r$-equitable chromatic threshold $\chi^*_{r=} (G)$ of a graph $G$ is actually equal to the least $n$ such that $G$ has an $r$-equitable $k$-coloring for all $k > n$ and $G$ has an $r$-equitable $n$-coloring in which no color is missing. Finally, according to the above theorems, we have the following corollaries.

\begin{corollary}\label{13}For any $r \geqslant 0$ and $k \geqslant t$, $K_{t(n)}$ has an $r$-equitable $k$-coloring if and only if $\lceil{n/\lfloor{k/t}\rfloor}\rceil - \lfloor{n/\lceil{k/t}\rceil}\rfloor \leqslant r$. \end{corollary}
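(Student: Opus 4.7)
My plan is to deduce Corollary 13 directly from Theorem 10 applied to $K_{t(n)}$. Throughout, write $q=\lfloor k/t\rfloor$ and $q'=\lceil k/t\rceil$; the hypothesis $k\geq t$ gives $q\geq 1$, and $q'\in\{q,q+1\}$. The crucial reformulation is that, for $K_{t(n)}$, clause (2) of Theorem \ref{10} (the existence of a positive integer $m$ with $t\lfloor n/m\rfloor\geq k\geq t\lceil n/(m+r)\rceil$) is equivalent by integrality to $\lfloor n/m\rfloor\geq q'$ together with $\lceil n/(m+r)\rceil\leq q$, which in turn is the single range condition
\[
\lceil n/q\rceil - r \;\leq\; m \;\leq\; \lfloor n/q'\rfloor .
\]

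For the forward direction, suppose $K_{t(n)}$ has an $r$-equitable $k$-coloring, so one of the two clauses of Theorem \ref{10} holds. If clause (2) holds, the existence of $m$ in the range above forces $\lceil n/q\rceil-\lfloor n/q'\rfloor\leq r$ immediately. If only clause (1) is available, then $r\geq 1$ and $k\geq t\lceil n/r\rceil+1$; dividing by $t$ gives $q\geq\lceil n/r\rceil$, whence $n/q\leq n/\lceil n/r\rceil\leq r$, and since $r$ is an integer this yields $\lceil n/q\rceil\leq r$, which together with $\lfloor n/q'\rfloor\geq 0$ completes the case.

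For the backward direction, assume $\lceil n/q\rceil-\lfloor n/q'\rfloor\leq r$ and split on whether $n\geq q'$. If $n\geq q'$, I set $m:=\lfloor n/q'\rfloor\geq 1$: then $mq'\leq n$ gives $\lfloor n/m\rfloor\geq q'$, and the hypothesis $m+r\geq\lceil n/q\rceil\geq n/q$ yields $\lceil n/(m+r)\rceil\leq q$, so clause (2) applies. If instead $n<q'$ (equivalently $k>nt$), then $\lfloor n/q'\rfloor=0$ so the hypothesis becomes $r\geq\lceil n/q\rceil\geq 1$, and I aim for clause (1). From $r\geq n/q$ one has $\lceil n/r\rceil\leq q$, so $t\lceil n/r\rceil\leq tq\leq k$; to upgrade this to the strict bound $k\geq t\lceil n/r\rceil+1$ I separate $t\nmid k$ (where $tq<k$ already suffices) from $t\mid k$ (where $q'=q>n\geq\lceil n/r\rceil$ gives $\lceil n/r\rceil\leq q-1$, hence $t\lceil n/r\rceil\leq k-t\leq k-2$).

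The main obstacle I anticipate is precisely this last subcase, $t\mid k$ with $n<q$: clause (2) of Theorem \ref{10} is genuinely unusable there, since no integer $m\geq 1$ can satisfy $\lfloor n/m\rfloor\geq q'=q>n$, so one is forced into clause (1), and recovering its extra ``$+1$'' of slack depends on noticing that the strict inequality $n<q$ propagates through $\lceil n/r\rceil\leq n$ to produce $\lceil n/r\rceil<q$.
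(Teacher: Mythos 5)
Your proof is correct and follows essentially the same route as the paper's: both apply Theorem \ref{10}, translate clause (2) by integrality into the two-sided bound $\lfloor n/\lceil k/t\rceil\rfloor \geqslant m$ and $m+r \geqslant \lceil n/\lfloor k/t\rfloor\rceil$, and for the converse choose $m=\lfloor n/\lceil k/t\rceil\rfloor$, falling back on clause (1) exactly when this $m$ is zero. The only (immaterial) difference is in that degenerate subcase, where the paper gets $k\geqslant tn+1\geqslant t\lceil n/r\rceil+1$ directly from $k/t>n$ rather than splitting on whether $t$ divides $k$.
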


\begin{proof} ($\Rightarrow$) Suppose that $K_{t(n)}$ has an $r$-equitable $k$-coloring. Then, either $r \geqslant 1$ and $k \geqslant t\lceil{n/r}\rceil +1$ or there exists a positive integer $m$ such that $t\lfloor{n/m}\rfloor \geqslant k \geqslant t\lceil{n/(m+r)}\rceil$ by Theorem \ref{10}. If $r \geqslant 1$ and $k \geqslant t\lceil{n/r}\rceil +1$, then $\lfloor k/t \rfloor \geqslant \lceil{n/r}\rceil \geqslant n/r$. Hence, we have $n/\lfloor{k/t}\rfloor \leqslant n/(n/r)=r$. Since $r$ is an integer, it implies that $\lceil{n/\lfloor{k/t}\rfloor}\rceil \leqslant r$ and thereby $\lceil{n/\lfloor{k/t}\rfloor}\rceil - \lfloor{n/\lceil{k/t}\rceil}\rfloor \leqslant r$. If there exists a positive integer $m$ such that $t\lfloor{n/m}\rfloor \geqslant k \geqslant t\lceil{n/(m+r)}\rceil$, then $n/m \geqslant \lfloor{n/m}\rfloor \geqslant \lceil{k/t}\rceil \geqslant k/t \geqslant \lfloor{k/t}\rfloor \geqslant \lceil{n/(m+r)}\rceil \geqslant n/(m+r)$.
Hence, we have $m \leqslant \lfloor{n/\lceil{k/t}\rceil}\rfloor \leqslant \lceil{n/\lfloor{k/t}\rfloor}\rceil \leqslant m+r$ because $m$ and $m+r$ are positive integers. It implies that $\lceil{n/\lfloor{k/t}\rfloor}\rceil - \lfloor{n/\lceil{k/t}\rceil}\rfloor \leqslant r$.

($\Leftarrow$) Let $m=\lfloor{n/\lceil{k/t}\rceil}\rfloor$. Firstly, if $m=0$, then $\lceil{k/t}\rceil \geqslant n+1$ and $\lfloor{k/t}\rfloor \geqslant n$. It implies that $r \geqslant \lceil{n/\lfloor{k/t}\rfloor}\rceil - \lfloor{n/\lceil{k/t}\rceil}\rfloor =1-0=1$ and $k/t > n$. Hence, $k \geqslant tn+1=t\lceil{n/1}\rceil +1 \geqslant t\lceil{n/r}\rceil +1$. Therefore, $K_{t(n)}$ has an $r$-equitable $k$-coloring by Theorem \ref{10}. Next, if $m \geqslant 1$, by $\lceil{n/\lfloor{k/t}\rfloor}\rceil - \lfloor{n/\lceil{k/t}\rceil}\rfloor \leqslant r$, then we have $m = \lfloor{n/\lceil{k/t}\rceil}\rfloor \leqslant n/\lceil{k/t}\rceil \leqslant n/(k/t) \leqslant n/\lfloor{k/t}\rfloor \leqslant \lceil{n/\lfloor{k/t}\rfloor}\rceil \leqslant m+r$. Hence, $n/m \geqslant \lfloor{n/m}\rfloor \geqslant \lceil{k/t}\rceil \geqslant k/t \geqslant \lfloor{k/t}\rfloor \geqslant \lceil{n/(m+r)}\rceil \geqslant n/(m+r)$. It implies that $\lfloor{n/m}\rfloor \geqslant \lceil{n/(m+r)}\rceil$ and $t\lfloor{n/m}\rfloor \geqslant k \geqslant t\lceil{n/(m+r)}\rceil$. Therefore, $K_{t(n)}$ has an $r$-equitable $k$-coloring by Theorem \ref{10}. \end{proof}

\begin{corollary}\label{14} For any $r \geqslant 0$, $\chi_{r=}(K_{t(n)}) = t=\chi_{=}(K_{t(n)})$. \end{corollary}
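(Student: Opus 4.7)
The plan is to specialize Theorem \ref{11} to the uniform case $n_1 = n_2 = \cdots = n_t = n$ and compute the quantity $\theta$ explicitly. Under this specialization, $\theta$ becomes $\max\{m \in \mathbb{N} \colon \lfloor n/m \rfloor \geqslant \lceil n/(m+r) \rceil\}$, since all $t$ simultaneous conditions from the definition of $\theta$ collapse to a single inequality.

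The main step is to pin down $\theta = n$. On the one hand, taking $m = n$ gives $\lfloor n/n \rfloor = 1$ and $\lceil n/(n+r) \rceil = 1$, because $0 < n/(n+r) \leqslant 1$ for any $r \geqslant 0$; so $m=n$ is admissible and $\theta \geqslant n$. On the other hand, for any $m \geqslant n+1$ we have $\lfloor n/m \rfloor = 0$ while $\lceil n/(m+r) \rceil \geqslant 1$ (using $n \geqslant 1$), so such $m$ is excluded and $\theta \leqslant n$. Therefore $\theta = n$.

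Substituting into Theorem \ref{11} yields $\chi_{r=}(K_{t(n)}) = t \lceil n/(n+r) \rceil = t$, again because $0 < n/(n+r) \leqslant 1$. The equality with $\chi_{=}(K_{t(n)})$ follows either by running the same computation with $r = 1$ or more directly by noting that assigning one color to each partite set produces an equitable (in particular, $r$-equitable for every $r$) $t$-coloring of $K_{t(n)}$, while $\chi(K_{t(n)}) = t$ is a trivial lower bound. No genuine obstacle arises; the only care required is to treat $r = 0$ and $r \geqslant 1$ uniformly when evaluating $\lceil n/(n+r) \rceil$, which in both ranges equals $1$.
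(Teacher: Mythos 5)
Your proof is correct and follows the route the paper intends: the paper states Corollary \ref{14} without an explicit proof, as a direct consequence of Theorem \ref{11}, and your computation that $\theta = n$ (since $m=n$ gives $\lfloor n/n\rfloor = 1 = \lceil n/(n+r)\rceil$ while any $m \geqslant n+1$ gives $\lfloor n/m\rfloor = 0 < 1$) correctly yields $\chi_{r=}(K_{t(n)}) = t\lceil n/(n+r)\rceil = t$. Your closing remark that the result also follows trivially by taking the partite sets as color classes (a $0$-equitable $t$-coloring) together with the lower bound $\chi(K_{t(n)}) = t$ is an even shorter valid argument.
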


\begin{corollary}\label{15} For any $r \geqslant 1$ and $n \geqslant 1$, let $\theta$ be the least positive integer such that $\lfloor{n/(\theta+1)}\rfloor < \lceil{n/(\theta+r)}\rceil$. Then $\chi^*_{r=}(K_{t(n)}) = t \lceil{n/(\theta+r)}\rceil$. \end{corollary}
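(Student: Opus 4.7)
The plan is to derive Corollary \ref{15} directly from Theorem \ref{12} specialised to $n_1=\cdots=n_t=n$, and then show that the rather intricate definition of $\theta$ appearing there reduces to the clean one stated in the corollary; throughout I use the standing assumption $t\geqslant 2$.

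First, set up notation. Write $M=\{m\in\mathbb{N}:\lfloor n/m\rfloor\geqslant\lceil n/(m+r)\rceil\}$, enumerate its elements as $m_1<m_2<\cdots<m_x$, and call the $\theta$ produced by Theorem \ref{12} for $K_{t(n)}$ by $\theta'$, to distinguish it from the $\theta$ of the corollary. The Theorem-\ref{12} gap condition becomes $t\lceil n/(m_j+r)\rceil>t\lfloor n/m_{j+1}\rfloor+1$; since both sides are integers and $t\geqslant 2$, this is equivalent to $\lceil n/(m_j+r)\rceil>\lfloor n/m_{j+1}\rfloor$, so $\theta'=\min\{m_j\in M:m_j=m_x\text{ or }\lceil n/(m_j+r)\rceil>\lfloor n/m_{j+1}\rfloor\}$. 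Separately, let $\theta$ be as defined in the corollary; it exists because $\lfloor n/(n+1)\rfloor=0<1\leqslant\lceil n/(n+r)\rceil$. By the minimality of $\theta$, for every $m$ with $1\leqslant m<\theta$ one has $\lfloor n/(m+1)\rfloor\geqslant\lceil n/(m+r)\rceil\geqslant\lceil n/(m+1+r)\rceil$, so $m+1\in M$; combined with the immediate $1\in M$, this shows $\{1,2,\dots,\theta\}\subseteq M$.

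The crux is then to identify $\theta$ with $\theta'$. For $\theta'\geqslant\theta$: any $m\in M$ with $m<\theta$ has $m+1\in M$ as just noted, so $m_{j+1}=m+1$, and the Theorem-\ref{12} condition $\lceil n/(m+r)\rceil>\lfloor n/(m+1)\rfloor$ fails by the very definition of $\theta$; hence no such $m$ can serve as $\theta'$. For $\theta'\leqslant\theta$: at $m=\theta$, either $\theta=m_x$ (done) or the next element $m_{j+1}$ of $M$ satisfies $m_{j+1}\geqslant\theta+1$, whence $\lfloor n/m_{j+1}\rfloor\leqslant\lfloor n/(\theta+1)\rfloor<\lceil n/(\theta+r)\rceil$, so the Theorem-\ref{12} condition still holds at $m=\theta$.

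Combining $\theta=\theta'$ with Theorem \ref{12} yields the desired $\chi^*_{r=}(K_{t(n)})=\sum_{i=1}^{t}\lceil n/(\theta+r)\rceil=t\lceil n/(\theta+r)\rceil$. The only non-obvious point is the last one: Theorem \ref{12}'s condition involves the next element of $M$, which need not equal $m_j+1$; what saves us is the monotonicity of $\lfloor n/m\rfloor$ in $m$, which guarantees that any gap in $M$ beyond $\theta$ only strengthens the gap condition there, so the complicated definition harmlessly degenerates into the simple one.
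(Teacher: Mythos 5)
Your proposal is correct and follows essentially the same route as the paper: specialise Theorem \ref{12} to $n_1=\cdots=n_t=n$, show $\{1,2,\ldots,\theta\}\subseteq M$, verify that the gap condition fails for every $m_j<\theta$ and holds at $m_j=\theta$, and conclude. Your observation that, for integers and $t\geqslant 2$, the condition $t\lceil n/(m_j+r)\rceil>t\lfloor n/m_{j+1}\rfloor+1$ is equivalent to $\lceil n/(m_j+r)\rceil>\lfloor n/m_{j+1}\rfloor$ is a small clean-up of the paper's arithmetic but not a different argument.
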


\begin{proof} Firstly, since $\lfloor{n/(n+1)}\rfloor=0 < 1=\lceil{n/(n+r)}\rceil$, we know that $\theta$ exists with $\theta \leqslant n$. Also, $\lfloor{n/m}\rfloor \geqslant \lfloor{n/(m+1)}\rfloor \geqslant \lceil{n/(m+r)}\rceil$ for each $m \in \{1,2,\ldots,\theta-1\}$ by the choice of $\theta$. Moreover, if $m=\theta -1$, then $\lfloor{n/(\theta-1+1)}\rfloor = \lfloor{n/\theta}\rfloor \geqslant \lceil{n/(\theta-1+r)}\rceil \geqslant \lceil{n/(\theta+r)}\rceil$. Hence, we have that $\lfloor{n/m}\rfloor \geqslant \lceil{n/(m+r)}\rceil$ for each $m \in \{1,2,\ldots,\theta\}$. Next, let $m_1,m_2,\ldots,m_x$ be all positive integers such that $m_1 < m_2 < \cdots < m_x$ and $\lfloor n/m_j \rfloor \geqslant \lceil{n/(m_j+r)}\rceil$ for all $1 \leqslant j \leqslant x$. Also, let $M=\{m_1,m_2,\ldots,m_x\}$. Then $m_1=1,m_2=2,\ldots,m_{\theta}=\theta$, and thereby $(\sum^t_{i=1}\lfloor{n/m_{j+1}}\rfloor)+1 > \sum^t_{i=1}\lfloor{n/m_{j+1}}\rfloor = t\lfloor{n/m_{j+1}}\rfloor \geqslant t\lceil{n/(m_j+r)}\rceil =\sum^t_{i=1}\lceil{n/(m_j+r)}\rceil$ for each $m_j \in \{m_1,m_2,\ldots,m_{\theta-1}\}$ by $\lfloor{n/(m+1)}\rfloor \geqslant \lceil{n/(m+r)}\rceil$ for each $m \in \{1,2,\ldots,\theta-1\}$. Furthermore, since $\lfloor{n/(\theta+1)}\rfloor < \lceil{n/(\theta+r)}\rceil$, it implies that $t\lfloor{n/(\theta+1)}\rfloor +t \leqslant t\lceil{n/(\theta+r)}\rceil$. Therefore, if $m_{\theta+1}$ exists, then $m_{\theta+1} \geqslant \theta+1$ and $(\sum^t_{i=1}\lfloor{n/m_{\theta+1}}\rfloor) +1 = t\lfloor{n/m_{\theta+1}}\rfloor +1 < t\lfloor{n/(\theta+1)}\rfloor +t \leqslant t\lceil{n/(\theta+r)}\rceil=\sum^t_{i=1}\lceil{n/(m_{\theta}+r)}\rceil$ for all $t \geqslant 2$. Thus we can conclude that $m_{\theta}=\min\{m_j \in M \colon \sum^t_{i=1}\lceil{n/(m_j+r)}\rceil > (\sum^t_{i=1}\lfloor{n/m_{j+1}}\rfloor) +1$ or $m_j=m_x\}$. Then $\chi^*_{r=}(K_{t(n)}) = t \lceil{n/(\theta+r)}\rceil$ by Theorem \ref{12} and $m_{\theta}=\theta$. \end{proof}

\section{Some concluding remarks}

The motivation for writing this paper was reading a paper titled ``on $r$-equitable colorings of trees and forests'' uploaded to the personal home page of Alain Heartz, see \cite{Hertz2011}. Although the notion of $r$-equitable colorability is a quite natural generalization, it seems to be proposed without precedent. Hence, we believe that such a paper might open the door for more interesting problems on equitable coloring of graphs in the future, and perhaps, for more valuable research. In this paper, we do some things on this side and view them as the beginning.

\bigskip

\noindent \textbf{Acknowledgments}

\bigskip

\noindent The author thanks the referees for many helpful comments which led to a better version of this paper.

%

\end{document}